\documentclass[a4paper]{amsart}
\usepackage[utf8]{inputenc}
\usepackage[ngerman, english]{babel}
\usepackage{amsmath,amsthm,amssymb,amsfonts}
\usepackage{mathrsfs}
\usepackage{ifthen}
\usepackage{enumerate}
\usepackage{comment}
\usepackage[pdfauthor={Martin Halla},pdftitle={weak T-coercivity and T-compatibility},
unicode,colorlinks=true,linkcolor=black,citecolor=black,urlcolor=black,pagebackref=false,breaklinks]{hyperref}
\usepackage{hyphsubst}
\usepackage{xcolor}
\usepackage{calc}
\usepackage{graphicx}
\usepackage[toc,page]{appendix}

\makeatletter
\def\@hspace#1{\begingroup\setlength\dimen@{#1}\hskip\dimen@\endgroup}
\makeatother

\newtheorem{theorem}{Theorem}[section]
\newtheorem{lemma}[theorem]{Lemma}
\newtheorem{definition}[theorem]{Definition}

\newtheorem{corollary}[theorem]{Corollary}

\newtheorem{proposition}[theorem]{Proposition}


\newcommand{\spml}{\lambda}
\newcommand{\spl}{\langle}
\newcommand{\spr}{\rangle}
\newcommand{\fu}{u}
\newcommand{\fv}{v}


\newcommand{\ol}[1]{\overline{#1}}

\newcommand{\bpm}{\begin{pmatrix}}
\newcommand{\epm}{\end{pmatrix}}


\renewcommand{\dim}{\operatorname{dim}}



\newcommand{\setC}{\mathbb{C}}

\newcommand{\setN}{\mathbb{N}}





\definecolor{brickred}{rgb}{0.8, 0.25, 0.33}
\definecolor{bostonuniversityred}{rgb}{0.8, 0.0, 0.0}
\definecolor{cornellred}{rgb}{0.7, 0.11, 0.11}
\definecolor{corn}{rgb}{0.98, 0.93, 0.36}
\definecolor{schoolbusyellow}{rgb}{1.0, 0.85, 0.0}
\definecolor{TUblue}{rgb}{0,102,153}
\colorlet{TUbluelight}{TUblue!30!white}

\title[weak T-coercivity and T-compatibility]{Galerkin approximation of holomorphic eigenvalue problems:\\
weak T-coercivity and T-compatibility}
\author{Martin Halla}
\email{martin.halla@protonmail.com}
\subjclass[2010]{47J10, 65H17, 65N25.}
\date{August 12th, 2019.}
\keywords{holomorphic eigenvalue problem, non-linear eigenvalue problem, approximation analysis, T-coercivity.}

\begin{document}

\begin{abstract}
We consider Galerkin approximations of holomorphic Fredholm operator eigenvalue problems for which the operator values
don't have the structure ``coercive+compact''. In this case the regularity (in sense of
[O.\ Karma, Numer.\ Funct.\ Anal.\ Optim.\ 17 (1996)]) of Galerkin approximations is not unconditionally satisfied and
the question of convergence is delicate.
We report a technique to prove regularity of approximations which is applicable to a wide range of eigenvalue problems.
In particular, we introduce the concepts of weak T-coercivity and T-compatibility and prove that for weakly T-coercive
operators, T-compatibility of Galerkin approximations implies their regularity. 

Our framework immediately improves the results of
[T.\ Hohage, L.\ Nannen, BIT 55(1) (2015)], is immediately applicable to
analyze approximations of eigenvalue problems related to
[A.-S.\ Bonnet-Ben Dhia, C.\ Carvalho, P.\ Ciarlet, Num.\ Math.\ 138(4) (2018)]
and is already applied in
[G.\ Unger, preprint (2017)].
\end{abstract}
\maketitle

The analysis of approximations for holomorphic Fredholm operator
eigenvalue problems has a long history~\cite{GrigorieffJeggle:73}, \cite{VainikkoKarma:74}, \cite{Vainikko:76},
\cite{JeggleWendland:77}, \cite{Karma:96a}, \cite{Karma:96b} and is usually
performed in the framework of discrete approximation schemes~\cite{Stummel:70}
and regular approximations of operator functions~\cite{Grigorieff:73},
\cite{AnseloneTreuden:85}. In this framework a complete convergence analysis and
asymptotic error estimates for eigenvalues are given by Karma
in~\cite{Karma:96a}, \cite{Karma:96b}. If the discrete approximation scheme is
chosen as a Galerkin scheme, then the assumptions of~\cite{Karma:96a},
\cite{Karma:96b} reduce to a single non-trivial assumption: the regular approximation
property (see Definition~\ref{def:RegularApproximation} for the meaning of regularity).
If the operator values are of the form ``coercive+compact'', the regularity of
Galerkin approximations is unconditionally satisfied. However, if the operator
values are not of this kind the question of spectrally converging approximations
is very delicate. This can already be observed for linear eigenvalue problems,
see e.g.\ \cite{BoffiBrezziGastaldi:00}, \cite{ArnoldFalkWinther:10}. Though
it is little known how to prove regularity of approximations.
In Theorem~\ref{thm:Regularity} we report a new condition on the
Galerkin spaces to ensure the regularity of Galerkin approximations such
that~\cite{Karma:96a}, \cite{Karma:96b} can be applied. This condition is
stronger than the classical regularity condition. However, it suffices for a
wide variety of applications.
On the side, we report in Lemma~\ref{lem:Eigenspaces} new asymptotic error estimates
on eigenspaces for regular Galerkin approximations (which are not provided
by~\cite{Karma:96a}, \cite{Karma:96b}). The latter is an improvement
of Unger~\cite[Theorem~4.3.7]{Unger:09}.
We combine our approach with the results of~\cite{Karma:96a}, \cite{Karma:96b} in
Proposition~\ref{prop:SpectralConvergence} and Corollary~\ref{cor:SpectralConvergence}.\\

As preparation for the forthcoming concept of weakly T-coercive operators (operator functions) we remind the reader
how  Fredholmness of operators is usually established. In the case of coercive operators Fredholmness is trivial. The
same holds for weakly coercive operators $A$, i.e.\ $A$ is a compact perturbation of a coercive operator.
Else wise we may construct an isomorphism $T$ such that $T^*A$ is weakly coercive ($T^*$ denotes the adjoint operator
of $T$), which yields the Fredholmness of $A$. The name ``T-coercivity'' originates from Bonnet-Ben Dhia, Ciarlet,
Zw\"olf~\cite{BonnetBDCiarletZwoelf:10}. 
The notion was introduced to analyze differential operators with sign-changing coefficients in the principal part which
occur e.g.\ in the modeling of meta materials. The technique is also applied in the analysis of interior transmission
eigenvalue problems, see e.g.\ \cite{Chesnel:12a}, \cite{Chesnel:12b}.
Although as far as we know, the concept goes back to a remark by Buffa~\cite{Buffa:05} (wherein $T=\theta$) on 
non-coercive operators with applications to Maxwell equations.
For an operator $A$ to be (weakly) $T$-coercive means that $T^*A$ is already (weakly) coercive. However, in eigenvalue
problems the operator values will be in general not bijective (precisely at the eigenvalues). Thus the nomenclature of
T-coercivity is not meaningful for our purposes and we will rely on the term \emph{weak} T-coercivity. In general the
Galerkin spaces will not be $T$-invariant and hence one cannot reproduce the above analysis on the approximation level.
An invariance condition is indeed not necessary, but can be relaxed. We will make precise in which sense the Galerkin
spaces have to interact with the operator $T$ to ensure regularity. It will turn out that the existence of bounded
linear operators $T_n$ from the Galerkin spaces $X_n$ to themselves such that
\begin{align}
\lim_{n\in\setN} \|T-T_n\|_n=0,
\end{align}
with
\begin{align}\label{eq:DiscreteNorm}
\|T-T_n\|_n:=\sup_{u_n\in X_n\setminus\{0\}} \frac{\|T\fu_n\|_X}{\|\fu_n\|_X}
\end{align}
is sufficient. We call this property ``$T$-compatibility''.
The norm~\eqref{eq:DiscreteNorm} was termed ``discrete norm'' by Descloux, Nassif and Rappaz
\cite{DesclouxNassifRappaz:78a}, \cite{DesclouxNassifRappaz:78b} wherein it was used in a different but familiar context.
In our context it was already employed by Hohage and Nannen~\cite{HohageNannen:15} for the analysis of perfectly matched
layer and Hardy space infinite element methods in cylindrical waveguides; and also by Bonnet-Ben Dhia, Ciarlet and
Carvalho \cite{Carvalho:15}, \cite{BonnetBDCarvalhoCiarlet:18} for the analysis of finite element methods for equations
which involve meta materials. Both works \cite{HohageNannen:15}, \cite{BonnetBDCarvalhoCiarlet:18} prove weak T-coercivity
and T-compatibility. Thus our results can directly be applied to improve the results of \cite{HohageNannen:15} and
to establish convergence results for approximations of the eigenvalue problems related to \cite{BonnetBDCarvalhoCiarlet:18}.
Note that the negative material parameters in meta materials are e.g.\ of the kind $(1-1/\omega^{-2})^{-1}$ with
$\omega^2$ being the eigenvalue parameter. Hence such eigenvalue problems are indeed non-linear.

However, the original motivation for this article was to provide a framework for the convergence analysis of boundary
element discretizations of boundary integral formulations of Maxwell eigenvalue problems and is already applied by
Unger~\cite{Unger:17}. Although the Maxwell eigenvalue problem is of linear nature, its formulation as boundary integral
equation becomes non-linear due to the dependency of the fundamental solution on the frequency.\\

The remainder of this article is structured as follows. In Section~\ref{sec:wTc} we introduce the notion of weak
T-coercivity and T-compatibility. In Theorem~\ref{thm:Regularity} we prove that T-compatibility implies regularity.
In Section~\ref{sec:HolomorphicEVP} we report in Lemma~\ref{lem:Eigenspaces} an approximation result on eigenspaces for
regular Galerkin approximations of holomorphic Fredholm operator eigenvalue problems. We merge our results with the
results of Karma \cite{Karma:96a}, \cite{Karma:96b} in Proposition~\ref{prop:SpectralConvergence} and
Corollary~\ref{cor:SpectralConvergence}.

\section{Weak T-coercivity and T-compatibility}\label{sec:wTc}
Let $X$ be a Hilbert space with scalar body $\setC$ and scalar product $\spl\cdot,\cdot\spr_X$ and associated norm
$\|\cdot\|_X$. Let $L(X)$ be the space of bounded linear operators from $X$ to $X$ with operator norm
$\|A\|_{L(X)}:=\sup_{\fu\in X\setminus\{0\}}\|A\fu\|_X/\|\fu\|_X$ for $A\in L(X)$.
For $A\in L(X)$ we denote its adjoint operator by $A^*\in L(X)$, i.e.\
$\spl\fu,A^*\fv\spr_X = \spl A\fu,\fv \spr_X$ for all $\fu,\fv\in X$.
For a closed subspace $X_n\subset X$ let $L(X_n)$ be the space of bounded linear operators from
$X_n$ to $X_n$ with norm $\|A_n\|_{L(X_n)}:=\sup_{u_n\in X_n\setminus\{0\}}\|A_nu_n\|_X/\|u_n\|_X$ for
$A_n\in L(X_n)$ and denote $P_n$ the orthogonal projection from $X$ to $X_n$.
Henceforth we assume that $(X_n)_{n\in\setN}$ is a sequence of closed subspaces of $X$ such that $P_n$ converges
point-wise to the identity, i.e.\ $\lim_{n\in\setN} \|u-P_nu\|_X=0$ for each $u\in X$.

\begin{definition}\label{def:TcoerciveOperator}
Let $A,T\in L(X)$ and $T$ be bijective. The operator $A$ is called
\begin{enumerate}
 \item coercive, if $\inf_{\fu\in X\setminus\{0\}}|\spl A\fu,\fu \spr_X|/\|\fu\|_X^2>0$,
 \item weakly coercive, if there exists a compact operator $K\in L(X)$ such that $A+K$ is coercive,
 \item $T$-coercive if $T^*A$ is coercive,
 \item weakly $T$-coercive if $T^*A$ is weakly coercive.
\end{enumerate}
\end{definition}
Due to the Lemma of Lax-Milgram every coercive operator is invertible.
Every weakly $T$-coercive operator is Fredholm with index zero.
For a (weakly) coercive operator $A$ it is true that the Galerkin approximations $A_n=P_nA|_{X_n}\in L(X_n)$ inherit
the (weak) coercivity, while for (weakly) $T$-coercive operators it is in general wrong.

We note that if $T^*A$ is weakly coercive, then $AT^{-1}$ is so too. Vice-versa, if $AT$ is weakly coercive,
then so is $T^{-*}A$. Hence we could alternatively define $A$ to be (weakly) \emph{right} $T$-coercive, if
$AT$ is (weakly) coercive. However, we stick to the former variant because it is more convenient.

For an operator $T\in L(X)$ or $T\in L(X_n),$ or a sum of such we define the ``discrete norm''
\begin{align}
\|T\|_n:=\sup_{\fu_n\in X_n\setminus\{0\}}\frac{\|T\fu_n\|_X}{\|\fu_n\|_X}
=\|T\|_{L(X_n,X)}=\|TP_n\|_{L(X)}.
\end{align}

\begin{definition}\label{def:ConvergenceInDiscreteNorm}
Consider $T\in L(X)$ and $(T_n\in L(X_n))_{n\in\setN}$. We say that $T_n$ converges to $T$ in discrete norm, if
\begin{align}
\lim_{n\in\setN} \|T-T_n\|_n=0.
\end{align}
\end{definition}

We define in the following what we mean by $T$-compatible approximations of weakly $T$-coercive operators.
\begin{definition}\label{def:TCompatibleApproximation}
Let $A\in L(X)$ be weakly $T$-coercive. Then we call the sequence of Galerkin approximations
$(A_n:=P_nA|_{X_n}\in L(X_n))_{n\in\setN}$ $T$-compatible, if $(A_n)_{n\in\setN}$ is a sequence of index zero
Fredholm operators and there exists a sequence of index zero Fredholm operators $(T_n\in L(X_n))_{n\in\setN}$ such
that $T_n$ converges to $T$ in discrete norm: $\lim_{n\in\setN}\|T-T_n\|_n=0$.
\end{definition}

\begin{definition}\label{def:CompactSequence}
A sequence $(\fu_n \in X)_{n\in\setN}$ is said to be compact, if for every subsequence exists in turn a converging
subsubsequence.
\end{definition}

\begin{definition}\label{def:RegularApproximation}
A sequence $(A_n\in L(X_n))_{n\in\setN}$ is called regular, if for every bounded sequence $(\fu_n\in X_n)_{n\in\setN}$
the compactness of $(A_n\fu_n)_{n\in\setN}$ already implies the compactness of $(\fu_n)_{n\in\setN}$.
\end{definition}
Next we briefly elaborate on the notion of regularity for readers who are totally unfamiliar with this concept.
Regularity of Galerkin approximations is a meaningful generalization of stability and well suited for the approximation
analysis of eigenvalue problems. Consider for example bijective $A\in L(X)$ and its Galerkin approximation
$(A_n:=P_nA|_{X_n}\in L(X_n))_{n\in\setN}$. In this case regularity of $(A_n)_{n\in\setN}$ implies stability:
Assume that $(A_n)_{n\in\setN}$ is not stable. Thus there exists $(\fu_n\in X_n)_{n\in\setN}$ with $\|\fu_n\|_X=1$ for
each $n\in\setN$ such that $\lim_{n\in\setN}\|A_n\fu_n\|_X=0$. If $(A_n)_{n\in\setN}$ is regular, there exists a
subsequence $n(m)_{m\in\setN}$ and $\fu\in X$ such that $\lim_{m\in\setN}\fu_{n(m)}=\fu$. It follows $A\fu=\lim_{m\in\setN}
A_{n(m)}\fu_{n(m)}=0$. Since $A$ is bijective, it follows $\fu=0$ which is a contradiction to $\|\fu_{n(m)}\|_X=1$.

On the other hand, consider a holomorphic Fredholm operator function $A(\cdot)\colon\Lambda\subset\setC\to L(X)$
with non-empty resolvent set and sequences $(\lambda_n\in\Lambda,\fu_n\in X_n)_{n\in\setN}$ of eigenvalues with
normalized eigenelements of the Galerkin approximation (i.e.\ $A_n(\lambda_n)\fu_n=0$) such that
$\lim_{n\in\setN}\lambda_n=\lambda\in\Lambda$ (see Section~\ref{sec:HolomorphicEVP} for definitions and details).
If $A_n(\lambda)$ is regular for each $\lambda_n\in\Lambda$, then $\lambda$ is indeed an eigenvalue of $A(\cdot)$ (i.e.\
there occurs no spectral pollution):
Due to the continuity of $A_n(\cdot)$ with respect to $\lambda$, $A_n(\lambda_n)\fu_n=0$ implies
$\lim_{n\in\setN} A_n(\lambda)\fu_n=0$. If $(A_n(\lambda))_{n\in\setN}$ is regular, there exists a
subsequence $n(m)_{m\in\setN}$ and $\fu\in X$ such that $\lim_{m\in\setN}\fu_{n(m)}=\fu$. It follows
$A(\lambda)\fu=\lim_{m\in\setN} A_{n(m)}(\lambda)\fu_{n(m)}=0$ and $\|\fu\|_X=\lim_{m\in\setN} \|\fu_{n(m)}\|_X=1$, i.e.\
$\lambda$ is an eigenvalue of $A(\cdot)$ with normalized eigenelement $u$.\\

Our next goal is to prove in Theorem~\ref{thm:Regularity} that $T$-compatible Galerkin approximations of weakly
$T$-coercive operators are regular. In preparation we formulate the next two lemmata.
\begin{lemma}\label{lem:NormEstimates}
Let $T\in L(X)\setminus\{0\}$ and $(T_n\in L(X_n))_{n\in\setN}$ be a sequence of operators with $T_n\in L(X_n)$ and
$\lim_{n\in\setN} \|T-T_n\|_n=0$. Then there exist a constant $c>0$ and an index $n_0\in\setN$ such that
\begin{align}
\|T_n\|_{L(X_n)}, \|T_n\|_{L(X_n)}^{-1}\leq c
\end{align}
for all $n>n_0$. If $T$ is bijective and $T_n$ is Fredholm with index zero for each $n\in\setN$, then there
exist a constant $c>0$ and an index $n_0\in\setN$ such that $T_n$ is also bijective for all $n>n_0$ and
\begin{align}
\|(T_n)^{-1}\|_{L(X_n)} \leq c.
\end{align}
\end{lemma}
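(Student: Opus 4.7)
The plan is to extract the four conclusions from the single hypothesis $\|T-T_n\|_n\to 0$, combined respectively with $T\neq 0$ and with the bijectivity of $T$. Each estimate will follow from a one-line triangle-inequality manipulation on $X_n$; the only ingredient of substance is the identification of a good test vector in $X_n$ for the lower bound on $\|T_n\|_{L(X_n)}$, which uses the density assumption $P_n\to\Id$ pointwise.

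For the upper bound on $\|T_n\|_{L(X_n)}$, for any $\fu_n\in X_n\setminus\{0\}$ the triangle inequality gives
\[
\|T_n\fu_n\|_X\leq \|(T-T_n)\fu_n\|_X+\|T\fu_n\|_X\leq \bigl(\|T-T_n\|_n+\|T\|_{L(X)}\bigr)\|\fu_n\|_X,
\]
so $\|T_n\|_{L(X_n)}\leq 2\|T\|_{L(X)}$ as soon as $\|T-T_n\|_n\leq\|T\|_{L(X)}$. For the lower bound, since $T\neq 0$ I would pick $\fu\in X$ with $\|\fu\|_X=1$ and $\|T\fu\|_X\geq\|T\|_{L(X)}/2>0$ and set $\fu_n:=P_n\fu$. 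Pointwise convergence $P_n\to\Id$ yields $\fu_n\to\fu$, and hence $\|\fu_n\|_X\to 1$ and $\|T\fu_n\|_X\to\|T\fu\|_X$. The reverse triangle inequality then gives
\[
\|T_n\fu_n\|_X\geq\|T\fu_n\|_X-\|T-T_n\|_n\,\|\fu_n\|_X,
\]
whose right-hand side exceeds, say, $\|T\|_{L(X)}/8$ for all $n$ large enough; since $\|\fu_n\|_X\leq 1$, this forces $\|T_n\|_{L(X_n)}\geq \|T\|_{L(X)}/8$ for large $n$, yielding the required lower bound.

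Now assume $T$ is bijective and each $T_n$ is Fredholm of index zero. For any $\fu_n\in X_n$ with $\|\fu_n\|_X=1$, bijectivity of $T$ gives $1=\|\fu_n\|_X\leq\|T^{-1}\|_{L(X)}\,\|T\fu_n\|_X$, while the triangle inequality supplies $\|T\fu_n\|_X\leq\|T_n\fu_n\|_X+\|T-T_n\|_n$. Combining,
\[
\|T_n\fu_n\|_X\geq \|T^{-1}\|_{L(X)}^{-1}-\|T-T_n\|_n,
\]
and for $n$ large enough the right-hand side is bounded below by $\tfrac{1}{2}\|T^{-1}\|_{L(X)}^{-1}$. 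Therefore $T_n$ is injective for such $n$; being Fredholm of index zero it is then bijective, and rewriting the same estimate gives $\|T_n^{-1}\|_{L(X_n)}\leq 2\|T^{-1}\|_{L(X)}$.

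The argument is essentially bookkeeping; the only step where genuine input from the approximation setup is needed is the lower bound on $\|T_n\|_{L(X_n)}$, which requires exhibiting a unit-norm element of $X_n$ that is not annihilated by $T_n$. This is precisely where the hypotheses $T\neq 0$ and $P_n\to\Id$ pointwise enter, and they cannot be dispensed with.
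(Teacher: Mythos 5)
Your proof is correct and follows essentially the same route as the paper: triangle inequality for the upper bound, reverse triangle inequality plus injectivity and the index-zero Fredholm property for bijectivity and the inverse bound. The only cosmetic difference is in the lower bound on $\|T_n\|_{L(X_n)}$, where you construct an explicit test vector $P_n\fu$ while the paper invokes $\lim_{n\in\setN}\|T\|_n=\|T\|_{L(X)}$ directly; your version just spells out why that limit holds.
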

\begin{proof}
Let $\fu_n\in X_n$. With the triangle inequality we deduce
\begin{align*}
\|T_n\fu_n\|_X \leq \|T\fu_n\|_X + \|(T-T_n)\fu_n\|_X
\end{align*}
and hence
\begin{align*}
\|T_n\|_{L(X_n)}&\leq\|T\|_{L(X)}+\|T-T_n\|_n.
\end{align*}
Since $\lim_{n\in\setN}\|T-T_n\|_n=0$ the right hand side of the previous inequality is bounded. Similar, with the
inverse triangle inequality we deduce
\begin{align*}
\|T_n\fu_n\|_X \geq \|T\fu_n\|_X -\|(T-T_n)\fu_n\|_X
\end{align*}
and hence
\begin{align*}
\|T_n\|_{L(X_n)}&\geq\|T\|_n-\|T-T_n\|_n.
\end{align*}
It hold $\lim_{n\in\setN}\|T\|_n=\|T\|_{L(X)}>0$ and $\lim_{n\in\setN}\|T-T_n\|_n=0$. Thus let $n_0>0$ be such that
$|\|T\|_n-\|T\|_{L(X)}|<\|T\|_{L(X)}/3$ and $\|T-T_n\|_n<\|T\|_{L(X)}/3$ for all $n>n_0$. It follows
\begin{align*}
\|T_n\|_{L(X_n)}&\geq \|T\|_{L(X)}/3>0
\end{align*}
for all $n>n_0$.
For the last claim let $n_0>0$ be such that $\|T-T_n\|_n<\frac{1}{2\|T^{-1}\|_{L(X)}}$ for all $n>n_0$.
Again with the inverse triangle inequality and
\begin{align*}
\inf_{\fu\in X, \|\fu\|_X=1} \|T\fu\|_X=1/\|T^{-1}\|_{L(X)}>0
\end{align*}
it follows
\begin{align*}
\inf_{\fu_n\in X_n, \|\fu_n\|_X=1} \|T_n\fu_n\|_X
&\geq \inf_{\fu\in X, \|\fu\|_X=1} \|T\fu\|_X-\|T-T_n\|_n\\
&\geq 1/(2\|T^{-1}\|_{L(X)})
\end{align*}
for all $n>n_0$. We deduce that $T_n$ is injective. Since $T_n$ is Fredholm with index zero its bijectivity follows.
The norm estimate holds due to $\inf_{\fu_n\in X_n, \|\fu_n\|_X=1} \|T_n\fu_n\|_X=1/\|T_n^{-1}\|_{L(X_n)}$.
\end{proof}

\begin{lemma}\label{lem:Stability}
Let $A\in L(X)$ be weakly $T$-coercive and $K\in L(X)$ be compact such that $T^*A+K$ is coercive.
Let $(A_n:=P_nA|_{X_n}\in L(X_n))_{n\in\setN}$ be a $T$-compatible Galerkin approximation of $A$.
Then there exist $n_0\in\setN$ and $c>0$, such that $A_n+P_nT^{-*}K|_{X_n}\in L(X_n)$ is invertible and
\begin{align}
\|\big(A_n+P_nT^{-*}K|_{X_n}\big)^{-1}\|_{L(X_n)}\leq c
\end{align}
for all $n>n_0$.
\end{lemma}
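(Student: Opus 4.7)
The plan is to produce an auxiliary operator on $X_n$ of the form $B_n := T_n^*\big(A_n+P_nT^{-*}K|_{X_n}\big)\in L(X_n)$ and show directly that $B_n$ is coercive on $X_n$ for $n$ large. Once this is established, one factors out $T_n^*$ using Lemma~\ref{lem:NormEstimates} (applied to $T_n$, noting that $T$ is bijective and each $T_n$ is Fredholm of index zero, so $T_n$ and therefore $T_n^*$ are bijective with uniformly bounded inverses for large $n$) to obtain the claimed invertibility and norm bound for $A_n+P_nT^{-*}K|_{X_n}$.

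The key computation is to rewrite $\spl B_n\fu_n,\fu_n\spr_X$ in terms of the coercive operator $T^*A+K$. Using $T_n\fu_n\in X_n$ so that $P_n$ can be dropped against $T_n\fu_n$, and the identity $\spl T^{-*}K\fu_n,T\fu_n\spr_X=\spl K\fu_n,\fu_n\spr_X$, I expect to get
\begin{align*}
\spl B_n\fu_n,\fu_n\spr_X
=\spl (T^*A+K)\fu_n,\fu_n\spr_X+\spl (A+T^{-*}K)\fu_n,(T_n-T)\fu_n\spr_X
\end{align*}
for every $\fu_n\in X_n$. The first term is bounded below by $c_B\|\fu_n\|_X^2$ with $c_B>0$ the coercivity constant of $T^*A+K$, and the second is estimated by $\|A+T^{-*}K\|_{L(X)}\,\|T-T_n\|_n\,\|\fu_n\|_X^2$. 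Since $\|T-T_n\|_n\to 0$ by $T$-compatibility, for $n$ large this error is absorbed into the first term, yielding coercivity of $B_n$ with constant at least $c_B/2$, hence $\|B_n^{-1}\|_{L(X_n)}\leq 2/c_B$ by Lax--Milgram on $X_n$.

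To close the argument I combine this with the bound $\|T_n\|_{L(X_n)}\leq c$ from Lemma~\ref{lem:NormEstimates}. Writing $A_n+P_nT^{-*}K|_{X_n}=T_n^{-*}B_n$, the factorization gives invertibility and
\begin{align*}
\|\big(A_n+P_nT^{-*}K|_{X_n}\big)^{-1}\|_{L(X_n)}
\leq \|B_n^{-1}\|_{L(X_n)}\,\|T_n^*\|_{L(X_n)}
\leq \frac{2c}{c_B},
\end{align*}
which is the required uniform bound. (Fredholmness of $A_n+P_nT^{-*}K|_{X_n}$ needed to invoke Lemma~\ref{lem:NormEstimates}-type arguments is immediate: $A_n$ is Fredholm of index zero by $T$-compatibility and $P_nT^{-*}K|_{X_n}$ is compact on $X_n$ since $K$ is compact on $X$.)

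The only nontrivial step is the algebraic manipulation of $\spl B_n\fu_n,\fu_n\spr_X$: one must carefully exploit $T_n\fu_n\in X_n$ to eliminate $P_n$ in both summands and then insert/subtract $T\fu_n$ so that the $T^*A+K$ piece emerges and the remainder is controlled by the discrete norm $\|T-T_n\|_n$. This is the point where the definition of $T$-compatibility (convergence in discrete norm) is used in an essential way, and is the main conceptual content of the lemma; once this identity is in place, coercivity and the factorization through $T_n^*$ are routine.
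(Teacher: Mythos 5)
Your proposal is correct and is essentially the paper's argument in a slightly repackaged form: the paper proves a uniform discrete inf-sup bound for $A+T^{-*}K$ on $X_n$ by testing with $T_n\fv_n$, swapping $T_n$ for $T$ at a cost of $\|T-T_n\|_n$, and invoking the coercivity of $T^*A+K$, which is exactly your computation for $\spl B_n\fu_n,\fu_n\spr_X$ restricted to the diagonal $\fv_n=\fu_n$. Your factorization through $T_n^*$ and Lax--Milgram, together with Lemma~\ref{lem:NormEstimates} for the uniform bounds on $T_n$, yields the same conclusion by the same mechanism.
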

\begin{proof}
Let $n$ be large enough such that $T_n$ is bijective (see Lemma~\ref{lem:NormEstimates}). We compute
\begin{align*}
\inf_{\fu_n\in X_n\setminus\{0\}}\sup_{\fv_n\in X_n\setminus\{0\}}
  &\frac{|\langle (A+T^{-*}K)\fu_n,\fv_n \rangle_X|}{\|\fu_n\|_X\|\fv_n\|_X}\\
&\geq \inf_{\fu_n\in X_n\setminus\{0\}}\sup_{\fv_n\in X_n\setminus\{0\}}
\frac{|\langle (A+T^{-*}K)\fu_n,T_n\fv_n \rangle_X|}{\|T_n\|_{L(X_n)}\|\fu_n\|_X\|\fv_n\|_X}\\
&\geq \inf_{\fu_n\in X_n\setminus\{0\}}\sup_{\fv_n\in X_n\setminus\{0\}}
\frac{|\langle ((A+T^{-*}K)\fu_n,T\fv_n \rangle_X|}{\|T_n\|_{L(X_n)}\|\fu_n\|_X\|\fv_n\|_X}\\
& \qquad-\frac{\|A+T^{-*}K\|_{L(X)}}{\|T_n\|_{L(X_n)}}\|T-T_n\|_n\\
&= \inf_{\fu_n\in X_n\setminus\{0\}}\sup_{\fv_n\in X_n\setminus\{0\}}
\frac{|\langle T^*(A+T^{-*}K)\fu_n,\fv_n \rangle_X|}{\|T_n\|_{L(X_n)}\|\fu_n\|_X\|\fv_n\|_X}\\
& \qquad-\frac{\|A+T^{-*}K\|_{L(X)}}{\|T_n\|_{L(X_n)}}\|T-T_n\|_n\\
&= \inf_{\fu_n\in X_n\setminus\{0\}}\sup_{\fv_n\in X_n\setminus\{0\}}
\frac{|\langle (T^*A+K)\fu_n,\fv_n \rangle_X|}{\|T_n\|_{L(X_n)}\|\fu_n\|_X\|\fv_n\|_X}\\
& \qquad-\frac{\|A+T^{-*}K\|_{L(X)}}{\|T_n\|_{L(X_n)}}\|T-T_n\|_n\\
&\geq \tilde c\|T_n\|_{L(X_n)}^{-1}-\frac{\|A+T^{-*}K\|_{L(X)}}{\|T_n\|_{L(X_n)}}\|T-T_n\|_n
\end{align*}
with coercivity constant
\begin{align*}
\tilde c:=\inf_{\fu\in X\setminus\{0\}} |\langle (T^*A+K)\fu,\fu \rangle_X| / \|\fu\|_X^2>0.
\end{align*}
Since  $\|T_n\|_{L(X_n)}$ is uniformly bounded from above and below (see Lemma~\ref{lem:NormEstimates}) and
$T_n$ converges to $T$ in discrete norm by assumption, it follows the existence of $n_0\in\setN$ and $c>0$ such that
\begin{align*}
\inf_{\fu_n\in X_n\setminus\{0\}}\sup_{\fv_n\in X_n\setminus\{0\}}
\frac{|\langle (A+T^{-*}K)\fu_n,\fv_n \rangle_X|}{\|\fu_n\|_X\|\fv_n\|_X} \geq c
\end{align*}
for all $n>n_0$. Hence $A_n+P_nT^{-*}K|_{X_n}$ is injective. Since $A_n$ is Fredholm with index zero and
$K$ is compact, $A_n+P_nT^{-*}K|_{X_n}$ is Fredholm with index zero too. Thus $A_n+P_nT^{-*}K|_{X_n}$ is bijective.
The norm estimate follows now from
\begin{align*}
\inf_{\fu_n\in X_n\setminus\{0\}}\sup_{\fv_n\in X_n\setminus\{0\}} \frac{|\spl B_n\fu_n,\fv_n\spr_X|}{\|\fu_n\|_X\|\fv_n\|_X}
&=\inf_{\fu_n\in X_n\setminus\{0\}} \frac{\|B_n\fu_n\|_X}{\|\fu_n\|_X}\\
&=\left(\sup_{\fu_n\in X_n\setminus\{0\}} \frac{\|\fu_n\|_X}{\|B_n\fu_n\|_X}\right)^{-1}\\
&=\|B_n^{-1}\|_{L(X_n)}^{-1}
\end{align*}
for any bijective $B_n\in L(X_n)$.
\end{proof}

\begin{theorem}\label{thm:Regularity}
Let $A\in L(X)$ be weakly $T$-coercive and
\begin{align*}
(A_n:=P_nA|_{X_n}\in L(X_n))_{n\in\setN}
\end{align*}
be a $T$-compatible Galerkin approximation. Then $(A_n)_{n\in\setN}$ is regular.
\end{theorem}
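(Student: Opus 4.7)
The central idea is to introduce the auxiliary operator $B := A + T^{-*}K$ together with its Galerkin approximation $B_n := A_n + P_nT^{-*}K|_{X_n}$, noting the Galerkin identity $B_n = P_nB|_{X_n}$. Since $T^*B = T^*A + K$ is coercive by hypothesis, $B$ is bijective (Lax--Milgram applied to $T^*B$, combined with bijectivity of $T$), and Lemma~\ref{lem:Stability} provides $n_0\in\setN$ and $c>0$ such that $B_n$ is bijective with $\|B_n^{-1}\|_{L(X_n)}\leq c$ for all $n>n_0$. The entire proof then reduces to two claims: (i) if $(\fu_n\in X_n)_{n\in\setN}$ is bounded and $(A_n\fu_n)_{n\in\setN}$ is compact, then $(B_n\fu_n)_{n\in\setN}$ is compact; (ii) if $(\fu_n\in X_n)_{n\in\setN}$ is bounded and $(B_n\fu_n)_{n\in\setN}$ converges in $X$, then $(\fu_n)_{n\in\setN}$ converges in $X$.

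For (i), given any subsequence, I would extract a subsubsequence along which $A_n\fu_n$ converges (by compactness of $(A_n\fu_n)_{n\in\setN}$) and along which $T^{-*}K\fu_n$ converges in $X$, the latter because $T^{-*}K$ is compact and $(\fu_n)_{n\in\setN}$ is bounded. The pointwise convergence $P_n\to\Id$ together with the uniform bound $\|P_n\|_{L(X)}\leq 1$ then upgrades norm convergence of $T^{-*}K\fu_n$ to norm convergence of $P_nT^{-*}K\fu_n$ to the same limit, so $B_n\fu_n = A_n\fu_n + P_nT^{-*}K\fu_n$ converges along this subsubsequence.

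For (ii), suppose $B_n\fu_n\to g$ in $X$ and set $\fu := B^{-1}g$. Since $B_n^{-1}$ depends on $n$ one cannot ``apply $B^{-1}$ to the limit'' directly; my plan is to compare $\fu_n$ with the Galerkin best approximant $P_n\fu$ through
\begin{align*}
B_n(\fu_n - P_n\fu) \;=\; B_n\fu_n - P_nBP_n\fu.
\end{align*}
Because $B\in L(X)$ and $P_n\to\Id$ pointwise, $P_nBP_n\fu\to B\fu = g$, so the right-hand side tends to zero. The uniform bound $\|B_n^{-1}\|_{L(X_n)}\leq c$ from Lemma~\ref{lem:Stability} then forces $\|\fu_n - P_n\fu\|_X\to 0$, and combined with $P_n\fu\to\fu$ this yields $\fu_n\to\fu$, establishing regularity on the subsubsequence and hence on the original sequence.

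The main obstacle is step (ii): the inverses $B_n^{-1}$ act on different subspaces and no convergence is assumed for them, only a uniform bound on their norms. The comparison with the interpolant $P_n\fu$ is what converts this abstract uniform bound into strong convergence of $\fu_n$, and it crucially exploits the Galerkin identity $B_nP_n\fu = P_nBP_n\fu$ which is available precisely because $B_n$ is itself the Galerkin approximation of $B$.
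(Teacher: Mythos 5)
Your proposal is correct and follows essentially the same route as the paper: the paper likewise sets $\tilde A:=A+T^{-*}K$, extracts a subsequence along which $T^{-*}K\fu_n$ converges (compactness of $K$), invokes Lemma~\ref{lem:Stability} for the uniform bound on $\tilde A_n^{-1}$, and compares $\fu_n$ with $P_n\tilde A^{-1}(\fu'+\fu'')$ exactly as in your step (ii). No gaps.
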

\begin{proof}
Without loss of generality let $(\fu_n\in L(X_n))_{n\in\setN}$ be a bounded sequence,
$(A_n\fu_n)_{n\in\setN}$ and $\fu'\in X$ be such that $\lim_{n\in\setN}A_n\fu_n=\fu'$.
Let $K\in L(X)$ be compact such that $T^*A+K$ is coercive. Let $\tilde A:=A+T^{-*}K$ and $\tilde A_n:=P_n\tilde A|_{X_n}$.
Since $K$ is compact and $(\fu_n)_{n\in\setN}$ is bounded, there exist a subsequence
$(\fu_{n(m)})_{m\in\setN}$ and $\fu''\in X$ such that $\lim_{m\in\setN}T^{-*}Ku_{n(m)}=\fu''$. It follows
\begin{align*}
\lim_{m\in\setN} \tilde A_{n(m)} \fu_{n(m)}=\fu'+\fu''.
\end{align*}
Due to Lemma~\ref{lem:Stability} there exist $c>0$ and $m_0\in\setN$, such that for all $m>m_0$ operator
$\tilde A_{n(m)}$ is invertible and $\|\tilde A_{n(m)}^{-1}\|_{L(X_{n(m)})}\leq c$.
For $m>m_0$ we compute
\begin{align*}
\|\fu_{n(m)}-&\tilde A^{-1}(u'+u'')\|_X\\
&\leq \|\fu_{n(m)}-P_{n(m)}\tilde A^{-1}(u'+u'')\|_X + \|(I-P_{n(m)})\tilde A^{-1}(u'+u'')\|_X\\
&\leq c \|\tilde A_{n(m)}\fu_{n(m)}-\tilde A_{n(m)}P_{n(m)}\tilde A^{-1}(u'+u'')\|_X\\
&+\|(I-P_{n(m)})\tilde A^{-1}(u'+u'')\|_X\\
&\leq c\|\tilde A_{n(m)}\fu_{n(m)}-(\fu'+\fu'')\|_X \\
&+c\|(I-\tilde A_{n(m)}P_{n(m)}\tilde A^{-1})(u'+u'')\|_X\\
&+\|(I-P_{n(m)})\tilde A^{-1}(u'+u'')\|_X.
\end{align*}
The first term on the right hand side of the latter inequality converges to zero, as previously discussed.
The second and third term converge to zero, because $(P_{n(m)})_{m\in\setN}$ converges point-wise to the identity.
Hence
\begin{align*}
\lim_{m\in\setN} \fu_{n(m)} = \tilde A^{-1}(u'+u'').
\end{align*}
\end{proof}

\section{Holomorphic eigenvalue problems}\label{sec:HolomorphicEVP}
We refer the reader to \cite{GohbergLeiterer:09} and \cite[Appendix]{KozlovMazya:99}
for theory on holomorphic (Fredholm) operator functions.
Let $\Lambda\subset\setC$ be an open, connected and non-empty subset of $\setC$.
Let $A(\cdot)\colon\Lambda\to L(X)$ be an operator function.
An operator function $A(\cdot)$ is called holomorphic, if it is complex differentiable.
An operator function $A(\cdot)$ is called Fredholm, if $A(\spml)$ is Fredholm for each $\spml\in\Lambda$.
We denote the resolvent set and spectrum of an operator function $A(\cdot)\colon\Lambda\to L(X)$ as
\begin{align}
\rho\big(A(\cdot)\big):=\{\spml\in\Lambda\colon A(\spml)\text{ is invertible}\} \qquad\text{and}\qquad
\sigma\big(A(\cdot)\big):=\Lambda\setminus\rho\big(A(\cdot)\big).
\end{align}
For an operator function $A(\cdot)\colon\Lambda\to L(X)$ we denote by $A^*(\cdot)$ the operator function defined by
$A^*(\spml):=A(\spml)^*$ for each $\spml\in\Lambda$ and by $A^{-1}(\cdot)\colon\rho\big(A(\cdot)\big)\to L(X)$ the
operator function defined by $A^{-1}(\spml):=A(\spml)^{-1}$ for each $\spml\in\rho\big(A(\cdot)\big)$. Note that for
a holomorphic operator function $A(\cdot)\colon\Lambda\to L(X)$ the operator function defined by 
$\spml\mapsto A^*(\ol{\spml})$ is holomorphic as well. For a holomorphic operator function $A(\cdot)\colon\Lambda\to L(X)$
denote by $A^{(j)}(\cdot)\colon\Lambda\to L(X)$ the $j^{th}$ derivative of $A(\cdot)\colon\Lambda\to L(X)$.
It is well known (see e.g.\ \cite[Theorem~8.2]{GohbergGoldbergKaashoek:90}) that for a holomorphic Fredholm operator
function $A(\cdot)\colon\Lambda\to L(X)$ such that $A(\spml)$ is bijective for at least one $\spml\in\Lambda$, the
spectrum $\sigma\big(A(\cdot)\big)$ is discrete, has no accumulation points in $\Lambda$ and every
$\spml\in\sigma\big(A(\cdot)\big)$ is an eigenvalue. That is, there exists $\fu\in X$ such that $A(\spml)\fu=0$. In this
case we call $\fu$ an eigenelement. An ordered collection of elements $(\fu_0,\fu_1,\dots,\fu_{m-1})$ in $X$ is called a
Jordan chain at $\spml$ if $\fu_0$ is an eigenelement corresponding to $\spml$ and if
\begin{align}
\sum_{j=0}^l\frac{1}{j!}A^{(j)}(\spml)\fu_{l-j} \quad\text{for }l=0,1,\dots,m-1.
\end{align}
The elements of a Jordan chain are called generalized eigenelements and the closed linear hull of all generalized
eigenelements of $A(\cdot)$ at $\spml$ is called the generalized eigenspace $G(A(\cdot),\spml)$ for $A(\cdot)$ at $\spml$.
For an eigenelement $\fu\in\ker A(\spml)\setminus\{0\}$ we denote by $\varkappa(A(\cdot),\spml,u)$ the maximal length of
a Jordan chain at $\spml$ beginning with $\fu$ and
\begin{align}
\varkappa(A(\cdot),\spml):=\max_{\fu\in\ker A(\spml)\setminus\{0\}} \varkappa(A(\cdot),\spml,\fu).
\end{align}
The maximal length of a Jordan chain $\varkappa(A(\cdot),\spml)$ is always finite,
see e.g.~\cite[Lemma~A.8.3]{KozlovMazya:99}.
Next we generalize Definitions~\ref{def:TcoerciveOperator},~\ref{def:TCompatibleApproximation},
\ref{def:RegularApproximation} and Theorem~\ref{thm:Regularity} to operator functions.

\begin{definition}\label{def:TcoerciveOperatorFunction}
Let $A(\cdot), T(\cdot)\colon\Lambda\to L(X)$ be operator functions and $\rho\big(T(\cdot)\big)=\Lambda$.
$A(\cdot)$ is (weakly) ($T(\cdot)$-)coercive, if $A(\lambda)$ is (weakly) ($T(\lambda)$-)coercive for each
$\lambda\in\Lambda$.
\end{definition}

\begin{definition}\label{def:TCompatibleApproximationOperatorFunction}
Let $A(\cdot)\colon\Lambda\to L(X)$ be weakly $T(\cdot)$-coercive. Then we call the sequence of Galerkin approximations
$(A_n(\cdot):=P_nA(\cdot)|_{X_n}\colon \Lambda\to L(X_n))_{n\in\setN}$ $T(\cdot)$-compatible, if
$(A_n(\lambda))_{n\in\setN}$ is $T(\lambda)$ compatible for each $\lambda\in\Lambda$.
\end{definition}

\begin{definition}\label{def:RegularApproximationOperatorFunction}
Let $A(\cdot)\colon\Lambda\to L(X)$ be an operator function. The sequence of Galerkin approximations
$(A_n(\cdot):=P_nA(\cdot)|_{X_n}\colon \Lambda\to L(X_n))_{n\in\setN}$ is regular, if
$(A_n(\lambda))_{n\in\setN}$ is regular for each $\lambda\in\Lambda$
\end{definition}

\begin{theorem}\label{thm:RegularityOperatorFunction}
Let $A(\cdot)\colon\Lambda\to L(X)$ be weakly $T(\cdot)$-coercive and
\begin{align*}
(A_n(\cdot):=P_nA(\cdot)|_{X_n}\colon\Lambda\to L(X_n))_{n\in\setN}
\end{align*}
be a $T(\cdot)$-compatible Galerkin approximation. Then $(A_n(\cdot))_{n\in\setN}$ is regular.
\end{theorem}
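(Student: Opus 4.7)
My plan is to exploit the fact that Definitions~\ref{def:TcoerciveOperatorFunction}, \ref{def:TCompatibleApproximationOperatorFunction}, and \ref{def:RegularApproximationOperatorFunction} are all formulated pointwise in $\lambda \in \Lambda$, so the operator-function statement should reduce immediately to the operator statement of Theorem~\ref{thm:Regularity}. In other words, the theorem is really a packaging lemma rather than a new piece of analysis: all the work has already been done at the level of a single $\lambda$.

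Concretely, I would fix an arbitrary $\lambda \in \Lambda$ and unpack the hypotheses at this $\lambda$. The assumption that $A(\cdot)$ is weakly $T(\cdot)$-coercive means, by Definition~\ref{def:TcoerciveOperatorFunction}, that $A(\lambda)$ is weakly $T(\lambda)$-coercive in the sense of Definition~\ref{def:TcoerciveOperator}. The assumption that the Galerkin approximation $(A_n(\cdot))_{n\in\setN}$ is $T(\cdot)$-compatible means, by Definition~\ref{def:TCompatibleApproximationOperatorFunction}, that for this same $\lambda$ the sequence $(A_n(\lambda))_{n\in\setN}$ is a $T(\lambda)$-compatible Galerkin approximation of $A(\lambda)$ in the sense of Definition~\ref{def:TCompatibleApproximation}. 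Hence the hypotheses of Theorem~\ref{thm:Regularity} are satisfied at $\lambda$, and so $(A_n(\lambda))_{n\in\setN}$ is regular in the sense of Definition~\ref{def:RegularApproximation}.

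Finally, since $\lambda \in \Lambda$ was arbitrary, the regularity of $(A_n(\lambda))_{n\in\setN}$ holds for every $\lambda \in \Lambda$, which is exactly the content of Definition~\ref{def:RegularApproximationOperatorFunction} for the operator-function approximation $(A_n(\cdot))_{n\in\setN}$. No additional argument, uniformity in $\lambda$, or use of the holomorphic structure is needed, because regularity of the operator-function approximation is itself defined pointwise. The only conceivable obstacle would be if one secretly wanted a uniform-in-$\lambda$ or locally-uniform version of the conclusion, but as stated, the reduction is direct; the nontrivial analytic content has already been absorbed into Theorem~\ref{thm:Regularity}.
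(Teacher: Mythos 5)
Your proof is correct and is exactly the argument the paper intends: the definitions are all pointwise in $\lambda$, so the statement reduces immediately to Theorem~\ref{thm:Regularity} applied at each $\lambda\in\Lambda$ (the paper's one-line proof cites Theorem~\ref{thm:RegularityOperatorFunction} itself, which is evidently a typo for Theorem~\ref{thm:Regularity}). Your write-up simply makes the pointwise unpacking explicit.
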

\begin{proof}
Follows from Theorem~\ref{thm:RegularityOperatorFunction}.
\end{proof}

Next we prepare to apply~\cite{Karma:96a},~\cite{Karma:96b}.
\begin{lemma}\label{lem:DiscreteApproximationScheme}
Let $A(\cdot)\colon\Lambda\to L(X)$ be a holomorphic Fredholm operator function and let $(X_n)_{n\in\setN}$ be a
sequence of closed subspaces of $X$ with orthogonal projections $P_n$ onto $X_n$, such that $(P_n)_{n\in\setN}$
converges point-wise to the identity. Then the Galerkin scheme $\big(P_nA(\cdot)|_{X_n})_{n\in\setN}$ is a
discrete approximation scheme in the sense of~\cite{Karma:96a}.
\end{lemma}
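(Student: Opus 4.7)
The plan is to unpack the definition of a discrete approximation scheme in the sense of Karma~\cite{Karma:96a} and to verify each of its ingredients one by one; no deep new idea is needed, only a careful matching of the Galerkin setting to the abstract framework. A discrete approximation scheme consists of a system of spaces $X_n$ together with connecting (restriction) maps $p_n\colon X\to X_n$ such that $\|p_n\fu\|_X\to\|\fu\|_X$ for every $\fu\in X$, and operator functions $A_n(\cdot)\colon\Lambda\to L(X_n)$ which are holomorphic, locally uniformly bounded, and consistent with $A(\cdot)$ in the sense that $\|A_n(\spml)p_n\fu-p_nA(\spml)\fu\|_X\to 0$ for every $\fu\in X$ and $\spml\in\Lambda$.

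In our Galerkin situation the natural choice is $p_n:=P_n$, the orthogonal projection onto $X_n$, and $A_n(\spml):=P_nA(\spml)|_{X_n}$. The space compatibility $\|P_n\fu\|_X\to\|\fu\|_X$ is then immediate from the standing assumption $\|\fu-P_n\fu\|_X\to 0$ together with the reverse triangle inequality. Holomorphy of $A_n(\cdot)$ follows from holomorphy of $A(\cdot)$ by pre- and post-composition with the $\spml$-independent bounded linear maps $P_n$ and the inclusion $X_n\hookrightarrow X$, which preserve complex differentiability.

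For consistency, I would simply compute, for fixed $\spml\in\Lambda$ and $\fu\in X$,
\begin{align*}
\|A_n(\spml)P_n\fu-P_nA(\spml)\fu\|_X
&=\|P_nA(\spml)(P_n\fu-\fu)\|_X\\
&\leq \|A(\spml)\|_{L(X)}\,\|P_n\fu-\fu\|_X\longrightarrow 0,
\end{align*}
using $\|P_n\|_{L(X)}\leq 1$. Locally uniform boundedness likewise follows from $\|A_n(\spml)\|_{L(X_n)}\leq\|A(\spml)\|_{L(X)}$ together with continuity of $\spml\mapsto\|A(\spml)\|_{L(X)}$ on compact subsets of $\Lambda$ (a consequence of holomorphy of $A(\cdot)$).

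The only real care needed is to match the notation and conventions of~\cite{Karma:96a} (in particular, whether Fredholmness of the discrete operators is part of the definition of a \emph{discrete approximation scheme} or only assumed later when applying the convergence results). If it is part of the definition, I would note that the Galerkin scheme still qualifies because we consider $A_n(\spml)$ as an operator on the finite- or infinite-dimensional closed subspace $X_n$ (the Fredholm property of $A_n(\spml)$ is anyway the additional content that has to be added on a case-by-case basis in the subsequent Proposition~\ref{prop:SpectralConvergence}). I do not foresee any genuine obstacle; the lemma is essentially a dictionary entry that identifies our Galerkin setting with the abstract language of Karma.
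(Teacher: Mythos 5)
Your proposal is correct and follows essentially the same route as the paper: the paper's proof simply identifies $U=V=X$, $p_n=q_n=P_n$, $A_n(\cdot)=P_nA(\cdot)|_{X_n}$ in Karma's notation and asserts that assumptions a1)--a4) follow from the point-wise convergence of $P_n$, which is exactly the dictionary-matching you carry out (your version just spells out the norm-compatibility, holomorphy, consistency and boundedness verifications explicitly). Your remark that Fredholmness of $A_n(\spml)$ is not part of this lemma but is imposed separately later is also consistent with how the paper organizes Proposition~\ref{prop:SpectralConvergence}.
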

\begin{proof}
For a Galerkin scheme it holds with the notation of~\cite{Karma:96a}
\begin{align*}
U&=V=X, \qquad X_n=Y_n=X_n, \qquad A_n(\cdot)=P_nA(\cdot)|_{X_n}, \qquad p_n=q_n=P_n.
\end{align*}
Assumptions a1)-a4) of~\cite{Karma:96a} follow all from the point-wise convergence of $P_n$.
\end{proof}

Next we generalize Theorem~4.3.7 of~\cite{Unger:09}.
\begin{lemma}\label{lem:Eigenspaces}
Let $\Lambda\subset\setC$ be open, $X$ be a Hilbert space and $L(X)$ be the space of bounded linear operators
from $X$ to $X$. Let $A(\cdot)\colon\Lambda\to L(X)$ be a holomorphic Fredholm operator function with non-empty
resolvent set and $(X_n)_{n\in\setN}$ be a sequence of closed subspaces of $X$ with orthogonal projections $P_n$ onto
$X_n$, such that $(P_n)_{n\in\setN}$ converges point-wise to the identity, i.e.\ $\lim_{n\in\setN}\|u-P_nu\|_X=0$
for all $u\in X$.
Let $A_n(\cdot)\colon\Lambda\to L(X_n)$ be the Galerkin approximation of $A(\cdot)$ defined by
$A_n(\spml):=P_nA(\spml)|_{X_n}$ for each $\spml\in\Lambda$.
Let the assumptions of~\cite[Theorem~2, Theorem~3]{Karma:96a} and \cite[Theorem~2, Theorem~3]{Karma:96b}
be satisfied. Let $\tilde\Lambda\subset\Lambda$ be a compact set with rectifiable boundary
$\partial\tilde\Lambda\subset\rho\big(A(\cdot)\big)$ and $\tilde\Lambda\cap\sigma\big(A(\cdot)\big)=\{\spml_0\}$.
Then there exist $n_0\in\setN$ and $c>0$ such that for all $n>n_0$
\begin{align}
  \inf_{\fu_0\in\ker A(\spml_0)} \|\fu_n-\fu_0\|_X \leq c \Big(|\spml_n-\spml_0|+
  \max_{\substack{u'_0\in\ker A(\spml_0)\\\|u'_0\|_X\leq1}} \inf_{u'_n\in X_n} \|u'_0-u'_n\|_X\Big)
\end{align}
for all $\spml_n\in\sigma\big(A_n(\cdot)\big)\cap\tilde\Lambda$ and all $\fu_n\in \ker A_n(\spml_n)$ with $\|\fu_n\|_X=1$.
\end{lemma}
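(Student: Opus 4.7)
The plan is to reduce the claim to a discrete Fredholm stability bound for $A_n(\spml_n)$ on the orthogonal complement of its (finite-dimensional) kernel, applied to a carefully chosen element $\phi_n \in X_n$ that serves as a discrete proxy for the $M^\perp$-component of $\fu_n$, where $M := \ker A(\spml_0)$. Denote by $P_M$ the orthogonal projection onto $M$, set $\fu_n^M := P_M \fu_n$, $\fu_n^Y := \fu_n - \fu_n^M \in M^\perp$, and $\delta_n := \max_{u_0' \in M,\,\|u_0'\|_X \le 1} \inf_{v_n \in X_n} \|u_0' - v_n\|_X$; the left-hand side of the claim equals $\|\fu_n^Y\|_X$.

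First I would pick $\hat u_n \in X_n$ with $\|\hat u_n - \fu_n^M\|_X \le 2 \delta_n$ and set $\phi_n := \fu_n - \hat u_n \in X_n$; since $\|\phi_n - \fu_n^Y\|_X \le 2\delta_n$, it suffices to bound $\|\phi_n\|_X$. Using $A_n(\spml_n)\fu_n = 0$ together with $A(\spml_0)\fu_n^M = 0$,
\begin{align*}
A_n(\spml_n)\phi_n = -P_n[A(\spml_n) - A(\spml_0)]\fu_n^M - P_n A(\spml_n)(\hat u_n - \fu_n^M),
\end{align*}
so the Lipschitz continuity of $A(\cdot)$ on the compact set $\tilde\Lambda$ (by holomorphy) yields $\|A_n(\spml_n)\phi_n\|_X \le C_1(|\spml_n - \spml_0| + \delta_n)$. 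Next, set $K_n := \ker A_n(\spml_n)$ and establish the uniform discrete stability $\|\psi_n\|_X \le C_2 \|A_n(\spml_n)\psi_n\|_X$ for all $\psi_n \in K_n^\perp \cap X_n$: under the hypotheses of~\cite{Karma:96a} the approximations are regular and $\|A_n^{-1}(\zeta)\|_{L(X_n)}$ is uniformly bounded on $\partial\tilde\Lambda$, and a compactness/contradiction argument combining regularity with the qualitative gap convergence $K_n \to M$ (itself a consequence of regular approximation) produces such a $C_2 > 0$, uniformly in $n > n_0$ and in $\spml_n \in \tilde\Lambda$.

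Splitting $\phi_n = P_{K_n}\phi_n + (I - P_{K_n})\phi_n$ then bounds the second summand by $C_1 C_2(|\spml_n - \spml_0| + \delta_n)$. For the first summand, I would use $\fu_n^Y \perp M$ together with $\eta_n := \sup_{v \in K_n,\,\|v\|_X = 1} \|(I - P_M)v\|_X \to 0$: expanding $\langle \phi_n, v\rangle = \langle \phi_n - \fu_n^Y, v\rangle + \langle \fu_n^Y, (I - P_M)v\rangle$ for $v$ in an orthonormal basis of $K_n$ (whose dimension is uniformly bounded via Karma's spectral convergence) yields $\|P_{K_n}\phi_n\|_X \le C_3(\delta_n + \eta_n\|\fu_n^Y\|_X)$. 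Combining produces $\|\fu_n^Y\|_X \le C_4(|\spml_n - \spml_0| + \delta_n) + C_4\eta_n\|\fu_n^Y\|_X$, and choosing $n_0$ so that $C_4\eta_n \le 1/2$ for $n > n_0$ absorbs the last term and delivers the claim. The main technical obstacle is the uniform discrete Fredholm stability on $K_n^\perp \cap X_n$: it is the discrete counterpart of $A(\spml_0)|_{M^\perp}\colon M^\perp \to \ran A(\spml_0)$ being a topological isomorphism, and obtaining a stability constant uniform in $n$ and in $\spml_n \in \tilde\Lambda$ requires careful use of Karma's regularity together with the gap convergence $K_n \to M$.
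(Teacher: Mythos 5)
The paper's own proof is essentially a citation: it observes that the proof of \cite[Theorem~4.3.7]{Unger:09} uses the special structure of $A(\cdot)$ only through \cite[Lemma~4.2.1]{Unger:09}, whose conclusion (the qualitative statement that normalized discrete eigenelements cluster at $\ker A(\spml_0)$) already follows from \cite[Theorem~2~ii)]{Karma:96a}. Your attempt at a self-contained argument is structurally sensible in its outer layer --- the choice of $\phi_n$, the residual bound $\|A_n(\spml_n)\phi_n\|_X\leq C_1(|\spml_n-\spml_0|+\delta_n)$, the fact that $\eta_n\to0$, and the final absorption step are all fine --- but it has a genuine gap at its declared ``main technical obstacle''.

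The uniform discrete stability $\|\psi_n\|_X\leq C_2\|A_n(\spml_n)\psi_n\|_X$ for all $\psi_n\in K_n^\perp\cap X_n$ with $K_n=\ker A_n(\spml_n)$ is false in general, and so is the gap convergence $K_n\to M$ in the direction $\sup_{v\in M,\|v\|_X=1}\dist(v,K_n)\to0$ that your contradiction argument needs (the direction you actually use for $\eta_n$, namely $\sup_{v\in K_n}\dist(v,M)\to0$, is the harmless one). The obstruction is eigenvalue splitting: if $\spml_0$ has $\dim\ker A(\spml_0)\geq2$, the discrete spectrum in $\tilde\Lambda$ generically consists of several distinct eigenvalues $\spml_n^{(1)},\dots,\spml_n^{(k)}\to\spml_0$, so that $\dim K_n<\dim M$ is possible and $M$ cannot be approximated by $K_n$ alone. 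Concretely, take $\spml_n=\spml_n^{(1)}$ and let $\psi_n$ be the normalized $K_n$-orthogonal component of an eigenelement $v_n^{(2)}\in\ker A_n(\spml_n^{(2)})$; then $\psi_n\in K_n^\perp\cap X_n$ with $\|\psi_n\|_X=1$, while $A_n(\spml_n^{(1)})\psi_n$ is a multiple of $\big(A_n(\spml_n^{(1)})-A_n(\spml_n^{(2)})\big)v_n^{(2)}$, whose norm is $O(|\spml_n^{(1)}-\spml_n^{(2)}|)\to0$. Hence no uniform $C_2$ exists, and your compactness argument breaks precisely because the limit $\psi\in M$ of such $\psi_n$ need not be approximable by elements of $K_n$. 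To repair the argument one must replace $K_n$ by the full discrete invariant subspace associated with \emph{all} of $\sigma(A_n(\cdot))\cap\tilde\Lambda$ (controlled via the resolvent on $\partial\tilde\Lambda$ and a contour integral), which is in effect what the Karma/Unger route does; alternatively, accept the qualitative statement $\dist(\fu_n,\ker A(\spml_0))\to0$ from \cite[Theorem~2~ii)]{Karma:96a} as the paper does and only quantify from there.
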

\begin{proof}
We proceed as in~\cite{Unger:09}: Theorem~4.3.7 of~\cite{Unger:09} requires a special
form of the operator function $A(\cdot)$. However its proof uses this assumption only to
apply Lemma~4.2.1 of~\cite{Unger:09}. Hence we need to establish the result of~\cite[Lemma~4.2.1]{Unger:09} without the
assumption on the special form of $A(\cdot)$.
However, the result of~\cite[Lemma~4.2.1]{Unger:09} already follows from~\cite[Theorem~2 ii)]{Karma:96a}.
\end{proof}

Next we apply~\cite{Karma:96a},~\cite{Karma:96b} and Lemma~\ref{lem:Eigenspaces}.
\begin{proposition}\label{prop:SpectralConvergence}
Let $\Lambda\subset\setC$ be open, connected and non-empty, $X$ be a Hilbert space and $L(X)$ be the space of bounded
linear operators from $X$ to itself. Let $A(\cdot)\colon\Lambda\to L(X)$ be a holomorphic Fredholm operator function
with non-empty resolvent set $\rho\big(A(\cdot)\big)\neq\emptyset$.
Let $(X_n)_{n\in\setN}$ be a sequence of closed subspaces of $X$ with orthogonal projections $P_n$ onto $X_n$,
such that $(P_n)_{n\in\setN}$ converges point-wise to the identity, i.e.\ $\lim_{n\in\setN}\|u-P_nu\|_X=0$
for each $u\in X$. Let $A_n(\cdot)\colon\Lambda\to L(X_n)$ be the Galerkin approximation of $A(\cdot)$ defined by
$A_n(\spml):=P_nA(\spml)|_{X_n}$ for each $\spml\in\Lambda$. Assume that $A_n(\lambda)$ is Fredholm with index zero
for each $\lambda\in\Lambda$ and $n\in\setN$. Assume that $(A_n(\cdot))_{n\in\setN}$ is a regular
approximation of $A(\cdot)$ (see Definition~\ref{def:RegularApproximationOperatorFunction}).
Then the following results hold.
\begin{enumerate}[i)]
 \item\label{item:SP-a} For every eigenvalue $\spml_0$ of $A(\cdot)$ exists a sequence $(\spml_n)_{n\in\setN}$
 converging to $\spml_0$ with $\spml_n$ being an eigenvalue of $A_n(\cdot)$ for almost all $n\in\setN$.
 \item\label{item:SP-b} Let $(\spml_n, u_n)_{n\in\setN}$ be a sequence of normalized eigenpairs of $A_n(\cdot)$, i.e.\
 \begin{align*}
 A_n(\spml_n)u_n=0,
 \end{align*}
 and $\|u_n\|_X=1$, so that $\spml_n\to \spml_0\in\Lambda$, then
 \begin{enumerate}[a)]
  \item $\spml_0$ is an eigenvalue of $A(\cdot)$,
  \item $(u_n)_{n\in\setN}$ is a compact sequence and its cluster points are normalized eigenelements of $A(\spml_0)$.
 \end{enumerate}
 \item\label{item:SP-c} For every compact $\tilde\Lambda\subset\rho(A)$ the sequence $(A_n(\cdot))_{n\in\setN}$ is
 stable on  $\tilde\Lambda$, i.e.\ there exist $n_0\in\setN$ and $c>0$ such that $\|A_n(\spml)^{-1}\|_{L(X_n)}\leq c$ for
 all  $n>n_0$ and all $\spml\in\tilde\Lambda$.
 \item\label{item:SP-d} \label{item:Stability}For every compact $\tilde\Lambda\subset\Lambda$ with rectifiable boundary
 $\partial\tilde\Lambda\subset\rho\big(A(\cdot)\big)$ exists an index $n_0\in\setN$ such that
 \begin{align}
 \dim G(A(\cdot),\spml_0) = \sum_{\spml_n\in\sigma\left(A_n(\cdot)\right)\cap\tilde\Lambda}
 \dim G(A_n(\cdot),\spml_n).
 \end{align}
 for all $n>n_0$, whereby $G(B(\cdot),\spml)$ denotes the generalized eigenspace of an operator function
 $B(\cdot)$ at $\spml\in\Lambda$.
\end{enumerate}
Let $\tilde\Lambda\subset\Lambda$ be a compact set with rectifiable boundary
$\partial\tilde\Lambda\subset\rho\big(A(\cdot)\big)$, $\tilde\Lambda\cap\sigma\big(A(\cdot)\big)=\{\spml_0\}$ and
\begin{align}\label{eq:deltan}
\begin{split}
\delta_n&:=\max_{\substack{u_0\in G(A(\cdot),\spml_0)\\\|u_0\|_X\leq1}} \, \inf_{u_n\in X_n} \|u_0-u_n\|_X,\\
\delta_n^*&:=\max_{\substack{u_0\in G(A^*(\ol{\cdot}),\spml_0)\\\|u_0\|_X\leq1}} \, \inf_{u_n\in X_n} \|u_0-u_n\|_X,
\end{split}
\end{align}
whereby $\overline{\spml_0}$ denotes the complex conjugate of $\spml_0$ and $A^*(\cdot)$
the adjoint operator function of $A(\cdot)$ defined by $A^*(\spml):=A(\spml)^*$ for each $\spml\in\Lambda$.
Then there exist $n\in\setN$ and $c>0$ such that for all $n>n_0$
\begin{enumerate}[i)]
\setcounter{enumi}{4}
  \item\label{item:SP-e}
  \begin{align}
  |\spml_0-\spml_n|\leq c(\delta_n\delta_n^*)^{1/\varkappa\left(A(\cdot),\spml_0\right)}
  \end{align}
  for all $\spml_n\in\sigma\big(A_n(\cdot)\big)\cap\tilde\Lambda$, whereby
  $\varkappa\left(A(\cdot),\spml_0\right)$ denotes the maximal length of a Jordan
  chain of $A(\cdot)$ at the eigenvalue $\spml_0$,
  \item\label{item:SP-f}
  \begin{align}
  |\spml_0-\bar \spml_n|\leq c\delta_n\delta_n^*
  \end{align}
  whereby $\bar \spml_n$ is the weighted mean of all the eigenvalues of $A_n(\cdot)$ in $\tilde\Lambda$
  \begin{align}
  \bar \spml_n:=\sum_{\spml\in\sigma\left(A_n(\cdot)\right)\cap\tilde\Lambda}\spml\,
  \frac{\dim G(A_n(\cdot),\spml)}{\dim G(A(\cdot),\spml_0)},
  \end{align}
  \item\label{item:SP-g}
  \begin{align}
  \begin{split}
  \inf_{u_0\in\ker A(\spml_0)} \|u_n-u_0\|_X &\leq c \Big(|\spml_n-\spml_0|+
  \max_{\substack{u'_0\in\ker A(\spml_0)\\\|u_0'\|_X\leq1}} \inf_{u'_n\in X_n} \|u'_0-u'_n\|_X\Big)\\
  &\leq c\big(c(\delta_n\delta_n^*)^{1/\varkappa\left(A(\cdot),\spml_0\right)} + \delta_n\big)
  \end{split}
  \end{align}
  for all $\spml_n\in\sigma\big(A_n(\cdot)\big)\cap\tilde\Lambda$ and all $u_n\in \ker A_n(\spml_n)$ with $\|u_n\|_X=1$.
\end{enumerate}
\end{proposition}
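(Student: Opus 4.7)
The plan is to verify that all hypotheses of Karma \cite{Karma:96a}, \cite{Karma:96b} are satisfied and then combine their conclusions with Lemma \ref{lem:Eigenspaces}. The crucial observation is that Lemma \ref{lem:DiscreteApproximationScheme} already certifies that the Galerkin family $(P_nA(\cdot)|_{X_n})_{n\in\setN}$ is a discrete approximation scheme in Karma's sense, so assumptions a1)--a4) of \cite{Karma:96a} reduce to the point-wise convergence $\lim_{n\in\setN}\|u-P_nu\|_X=0$, which is part of the standing hypothesis. The only further non-trivial hypothesis of Karma's theorems is the regular approximation property, which is assumed here (and which, in typical applications, one verifies via Theorem \ref{thm:RegularityOperatorFunction}).

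With these hypotheses in place, items \ref{item:SP-a}, \ref{item:SP-c} and \ref{item:SP-d} are direct translations of \cite[Theorem 2, Theorem 3]{Karma:96a}: upper semicontinuity of the spectrum, uniform stability of the discrete resolvents on compact subsets of $\rho\big(A(\cdot)\big)$, and preservation of the total algebraic multiplicity inside a contour. Item \ref{item:SP-b} (absence of spectral pollution and compactness of eigenelement sequences) follows either from the same Karma theorems or by the short argument sketched in the paragraph after Definition \ref{def:RegularApproximation}: continuity of $A_n(\cdot)$ in $\spml$ combined with $A_n(\spml_n)u_n=0$ and $\spml_n\to \spml_0$ yields $A_n(\spml_0)u_n\to 0$, whereupon regularity of $(A_n(\spml_0))_{n\in\setN}$ extracts a subsequence $u_{n(m)}\to u$ with $A(\spml_0)u=0$ and $\|u\|_X=1$.

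Items \ref{item:SP-e} and \ref{item:SP-f} are asymptotic eigenvalue error estimates; they come straight from \cite[Theorem 2, Theorem 3]{Karma:96b} once one identifies the abstract gap quantities appearing there with $\delta_n$ and $\delta_n^*$ defined in \eqref{eq:deltan}. For item \ref{item:SP-g}, the first inequality is precisely Lemma \ref{lem:Eigenspaces}. The second inequality then follows by inserting the bound of item \ref{item:SP-e} for $|\spml_n-\spml_0|$, and by using the inclusion $\ker A(\spml_0)\subset G(A(\cdot),\spml_0)$ to bound
\begin{align*}
\max_{\substack{u'_0\in\ker A(\spml_0)\\\|u'_0\|_X\leq 1}}\inf_{u'_n\in X_n}\|u'_0-u'_n\|_X \leq \delta_n.
\end{align*}

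The main obstacle is not conceptual but bureaucratic: carefully aligning the notation of \cite{Karma:96a}, \cite{Karma:96b} (where two Hilbert spaces $U,V$ with separate discretizations $X_n,Y_n$ and connecting operators $p_n,q_n$ are distinguished) with the symmetric Galerkin setup here, and confirming that the gap functionals appearing in Karma's error estimates really do coincide with $\delta_n$ and $\delta_n^*$. Once this identification is fixed, every claim is either a direct citation or, in the case of \ref{item:SP-g}, a two-line combination of Lemma \ref{lem:Eigenspaces} with item \ref{item:SP-e}.
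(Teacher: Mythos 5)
Your proposal is correct and follows essentially the same route as the paper: verify Karma's assumptions via Lemma~\ref{lem:DiscreteApproximationScheme} and the regularity hypothesis, cite \cite[Theorems 2--3]{Karma:96a} for i)--iv), \cite[Theorems 2--3]{Karma:96b} for v)--vi), and Lemma~\ref{lem:Eigenspaces} (combined with v) and the inclusion $\ker A(\spml_0)\subset G(A(\cdot),\spml_0)$) for vii). If anything, your reduction of Karma's stability-type assumption to the assumed regularity is more consistent with the proposition's stated hypotheses than the paper's appeal to Lemma~\ref{lem:Stability}, which formally presupposes weak $T$-coercivity.
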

\begin{proof}
The first three claims follow with~\cite[Theorem~2]{Karma:96a}, if we can proof that the required assumptions are
satisfied. First of all a Galerkin scheme is a discrete approximation scheme due to
Lemma~\ref{lem:DiscreteApproximationScheme}. The operator function $A(\cdot)$ are holomorphic by
assumption. It follows that $A_n(\cdot):=P_nA(\cdot)P_n|_{X_n}$ is also holomorphic.
$A(\cdot)$ and $A_n(\cdot)$ are index zero Fredholm operator functions by assumption.
Assumption b1 $\rho\big(A(\cdot)\big)\neq\emptyset$ is also an assumption of this theorem.
Assumption b2 follows from Lemma~\ref{lem:Stability} (at least for sufficiently large $n$). Assumption b3 follows from
$\|A_n(\spml)\|_{L(X_n)}\leq\|A(\spml)\|_{L(X)}$. Assumption b4 follows from the point-wise
convergence of the projections $P_n$. Assumption b5 is also an assumption of this theorem.

The fourth claim follows with~\cite[Theorem~3]{Karma:96a}, if we can proof the required assumption (R).
We can chose $r_n$ as injection, i.e.\ $r_nx_n:=x_n$. Hence $\|r_n\|=1$. Since $p_n=P_n$ ii) follows from
the point-wise convergence of the projections $P_n$.

The fifth and sixth claim follow with~\cite[Theorem~2, Theorem~3]{Karma:96b}, if we can proof their required assumptions.
Assumption a1-a4 are canonical satisfied by Galerkin schemes. We already proved that Assumptions b1-b5
are satisfied. We can chose $p_n'=p_n=q'_n=q_n=P_n$. For \cite[Theorem~3]{Karma:96b} we can chose the same $r_n$
as before.

For the proof of the seventh claim we refer to Lemma~\ref{lem:Eigenspaces}.
\end{proof}

Finally we combine Theorem~\ref{thm:RegularityOperatorFunction} and Proposition~\ref{prop:SpectralConvergence}.
\begin{corollary}\label{cor:SpectralConvergence}
Let $\Lambda\subset\setC$ be open, connected and non-empty, $X$ be a Hilbert space and $L(X)$ be the space of bounded
linear operators from $X$ to $X$. Let $A(\cdot)\colon\Lambda\to L(X)$ be a holomorphic 
weakly $T(\cdot)$-coercive operator function (see Definition~\ref{def:TcoerciveOperatorFunction})
with non-empty resolvent set $\rho\big(A(\cdot)\big)\neq\emptyset$.
Let $(X_n)_{n\in\setN}$ be a sequence of closed subspaces of $X$ with orthogonal projections $P_n$ onto $X_n$,
such that $(P_n)_{n\in\setN}$ converges point-wise to the identity, i.e.\ $\lim_{n\in\setN}\|u-P_nu\|_X=0$
for each $u\in X$. Let $A_n(\cdot)\colon\Lambda\to L(X_n)$ be the Galerkin approximation of $A(\cdot)$ defined by
$A_n(\spml):=P_nA(\spml)|_{X_n}$ for each $\spml\in\Lambda$. Assume that $A_n(\cdot)$ is 
$T(\cdot)$-compatible (see Definition~\ref{def:TCompatibleApproximationOperatorFunction}).
Then results \ref{item:SP-a})-\ref{item:SP-g}) of Proposition~\ref{prop:SpectralConvergence} hold.
\end{corollary}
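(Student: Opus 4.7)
The plan is simply to verify that the hypotheses of Proposition~\ref{prop:SpectralConvergence} are met and then invoke it. Most ingredients are already furnished by the data of the corollary: $\Lambda$ is open, connected, non-empty, $X$ is Hilbert, $A(\cdot)$ is holomorphic, and $\rho\big(A(\cdot)\big)\neq\emptyset$. Fredholmness of $A(\lambda)$ for every $\lambda\in\Lambda$ is not an explicit assumption of the corollary, but it is automatic: weak $T(\lambda)$-coercivity means $T(\lambda)^*A(\lambda)$ is a compact perturbation of a coercive operator, hence $T(\lambda)^*A(\lambda)$ is Fredholm with index zero, and composing with the isomorphism $T(\lambda)^{-*}$ preserves this, so $A(\lambda)$ is Fredholm with index zero. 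The point-wise convergence of $P_n$ to the identity and the definition $A_n(\lambda):=P_nA(\lambda)|_{X_n}$ are also directly assumed.

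The two remaining hypotheses of Proposition~\ref{prop:SpectralConvergence} that still require a step are the index-zero Fredholmness of $A_n(\lambda)$ for each $\lambda\in\Lambda$ and $n\in\setN$, and the regularity of $(A_n(\cdot))_{n\in\setN}$. The former is built into Definition~\ref{def:TCompatibleApproximationOperatorFunction} (via Definition~\ref{def:TCompatibleApproximation}, which requires $A_n$ to be index-zero Fredholm), hence is part of the $T(\cdot)$-compatibility assumption. The latter follows immediately from Theorem~\ref{thm:RegularityOperatorFunction}: for each fixed $\lambda\in\Lambda$, $A(\lambda)$ is weakly $T(\lambda)$-coercive and $(A_n(\lambda))_{n\in\setN}$ is $T(\lambda)$-compatible, so $(A_n(\lambda))_{n\in\setN}$ is regular in the sense of Definition~\ref{def:RegularApproximation}, which is precisely regularity of the operator function per Definition~\ref{def:RegularApproximationOperatorFunction}.

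With all hypotheses verified, the conclusions \ref{item:SP-a})--\ref{item:SP-g}) of Proposition~\ref{prop:SpectralConvergence} transfer verbatim. There is no genuine obstacle here; the only subtlety worth flagging is the automatic Fredholmness of $A(\lambda)$ derived from weak $T$-coercivity, which is already observed in the discussion following Definition~\ref{def:TcoerciveOperator}. The proof is therefore a direct concatenation: Theorem~\ref{thm:RegularityOperatorFunction} $\Rightarrow$ regularity, then Proposition~\ref{prop:SpectralConvergence} $\Rightarrow$ the full list of spectral convergence and error estimates.
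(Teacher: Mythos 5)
Your proposal is correct and follows exactly the same route as the paper's (very terse) proof: weak $T(\cdot)$-coercivity gives index-zero Fredholmness of $A(\cdot)$, $T(\cdot)$-compatibility gives index-zero Fredholmness of the $A_n(\cdot)$ by definition, and Theorem~\ref{thm:RegularityOperatorFunction} gives regularity, after which Proposition~\ref{prop:SpectralConvergence} applies. You merely spell out the details the paper leaves implicit.
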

\begin{proof}
Since $A(\cdot)$ is weakly $T(\cdot)$-coercive, it is Fredholm with index zero.
Since $A_n(\cdot)$ is $T(\cdot)$-compatible, it is Fredholm with index zero and regular.
\end{proof}

\bibliographystyle{amsplain}
\bibliography{bibliography}
\end{document}